\documentclass{article}
\usepackage{import}

\usepackage[english]{babel}  % Englisch 
\usepackage[dvipsnames]{xcolor} %mehr farben
\usepackage{amsmath}  % Allgemeiner Formelsatz
\usepackage{amssymb}  % Noch ein paar mathematische Symbole
\usepackage{amsthm}   % Mehr Optionen für theorem-Umgebungen
\usepackage[utf8]{inputenc}  % universelle Eingabecodierung
\usepackage{graphicx}
\usepackage{color}
\usepackage[left=2.75cm,right=2.75cm,top=3cm,bottom=3.5cm]{geometry}
\usepackage{bbm,mathabx}
\usepackage{todonotes}
\usepackage{amsfonts,nccbbb,xfrac,mathtools}
\usepackage[shortlabels]{enumitem}
\usepackage{fancyhdr}
\usepackage{autobreak}
\usepackage{nicefrac, xfrac}

\usepackage[section]{placeins}
\usepackage{pgfplots}
\pgfplotsset{compat=1.18,width = 8.5cm}

\usepackage{mathtools}
\usepackage{physics}
\usepackage{esint}
\usepackage{verbatim}
\usepackage[nottoc]{tocbibind}
\usepackage{enumitem}
\usepackage{setspace}
\usepackage{subcaption}
\usepackage{float}
\usepackage{ragged2e}
\usepackage{aligned-overset}
\allowdisplaybreaks
\usepackage[pdfencoding=auto, psdextra]{hyperref}

\newtheorem{thm}{Theorem}%[section]
\newtheorem*{thm*}{Theorem}
\newtheorem{prop}[thm]{Proposition}

\newtheorem*{cor*}{Corollar}
\newtheorem{lem}[thm]{Lemma}
\newtheorem*{lem*}{Lemma}
\theoremstyle{definition}

\newtheorem*{example*}{Example}

\newtheorem*{rmk*}{Remark}
\newtheorem{defin/thm}[thm]{Definition/Theorem}
\newtheorem{defin/rmk}[thm]{Definition/Remark}
\newtheorem{nota/rmk}[thm]{Notation/Remark}
\newtheorem*{nota/rmk*}{Notation/Remark}

\newtheorem*{motiv*}{Motivation}

\newtheorem*{nota*}{Notation}

\newtheorem*{question*}{Frage}

\definecolor{Red}{rgb}{1,0,0}

\newcommand{\EE}{\mathbb{E}}

\newcommand{\RR}{\mathbb{R}}

\newcommand{\NN}{\mathbb{N}}

\begin{document}

\title{A large deviation principle for the Gaussian beta ensemble in the high temperature regime}

\author{Helene Götz, Jan Nagel}

\maketitle

\begin{abstract}
We show a large deviation principle for the weighted spectral measure of the Gaussian beta ensemble in the high temperature regime, when $n\beta_n\to 2c \in [0,\infty)$. 
The resulting rate function has a novel and interesting structure: it is finite only for discrete measures and moreover, in the very high-temperature regime (when $c=0$), additional terms may arise that penalize each support point beyond the first.

\end{abstract}

%%%%%%%%%%%%%%%%%%%%%%%%%%%%%%%%%%%%%%%%%%%%%%%%%%%%%%%%%%%%%%%%%%%%%%%%%
%%%%%%%%%%%%%%%%%%%%%%%%%%%%%%%%%%%%%%%%%%%%%%%%%%%%%%%%%%%%%%%%%%%%%%%%%

{\bf Keywords:} random matrices, spectral measure, large deviations.

{\bf MSC 2020:} 
60B20, %Random matrices (probabilistic aspects)
60F10, %Large deviations
%60F05, %Central limit and other weak theorems
47B36. %Jacobi (tridiagonal) operators (matrices) and generalizations

	\maketitle
\section{Introduction}
The Gaussian ensemble, also known as Wigner ensemble or Hermite ensemble, is one of the most classical ensembles in random matrix theory.
Its eigenvalues $\lambda_1,...,\lambda_n$ have the joint Lebesgue density
\begin{align}
	\label{eq:DensityGaussnotscaled}
	f_{\beta,n}(\lambda_1,...,\lambda_n) = Z_{\beta}^G \prod_{1 \leq i<j \leq n} |\lambda_i-\lambda_j|^{\beta}  e^{-\sum_{i=1}^n \frac{n\beta \lambda_i^2}{2}}
\end{align}
with $\beta>0$. When $\beta\in\{1,2,4\}$, full matrix models exist with entries real, complex, or quaternion Gaussian random variables, respectively, with eigenvalue density \eqref{eq:DensityGaussnotscaled}. For general $\beta>0$, \eqref{eq:DensityGaussnotscaled} is the particle density of a log-gas at inverse temperature $\beta$, and the eigenvalue density of a tridiagonal matrix model. We refer to \cite{mehta2004random,anderson2010introduction,forrester2010log,tao2023topics} for extensive informations on the Gaussian ensemble.   

Many limit theorems deal with the empirical eigenvalue measure $\bar\mu_n = \frac{1}{n} \sum_{k=1}^n\delta_{\lambda_k}$ of a matrix $X_n$ of the Gaussian ensemble, for which Wigner showed the convergence to the semicircle law \cite{wigner1958distribution}. We are interested in large deviation principles, which quantify the exponentially fast convergence, and have been proven for the sequence of empirical eigenvalue measures by \cite{arous1997large}. In \cite{gamboa2011large,gamboa2016sumrules}, large deviation principles have been shown for the weighted spectral measure of a matrix $X_n$ of the Gaussian ensemble, whose $k$-th moment is given by $(X_n^k)_{1,1}$, and where the weights $\tfrac{1}{n}$ of the empirical measure are replaced by Dirichlet distributed weights.   
The above results hold under the standard scaling, where $\beta>0$ is fixed and $n$ tends to infinity. 

In the high-temperature regime, $\beta=\beta_n$ depends on $n$ and tends to zero, such that $n\beta_n\to 2c\in(0,\infty)$. 
First studied in the context of Dyson Brownian motion \cite{cepa1997diffusing,allez2012invariant}, by now many limit theorems for fluctuations of the empirical eigenvalue measure have been transferred to this setting \cite{benaych2015poisson,nakano2018gaussian,dworaczek2024clt}.
A large deviation principle follows from \cite{garcia2019large,liu2020large}, see also the discussion in \cite{nakano2020poisson}, and large deviations of extreme eigenvalues have been studied by   
\cite{pakzad2020large,guera2025large,guionnet2022large}. 

We are interested in the large deviation behavior of the weighted spectral measure in the high temperature regime. While the limit of the empirical eigenvalue measure is a deterministic measure with full support on $\mathbb R$, the weighted spectral measure converges in distribution to a nondeterministic Dirichlet process \cite{nakano2026spectral}. 
Therefore, we rescale the eigenvalues by $\sqrt \beta_n$ and consider the measure
\begin{align} \label{def:spectralmeasure}
\mu_n = \sum_{k=1}^n w_k \delta_{\sqrt{\beta_n} \lambda_k} . 
\end{align}
Omitting the factor $\beta_n$ results precisely in the measure considered in \cite{nakano2026spectral}. The weights $(w_1,\dots ,w_n)$ are independent of the eigenvalues and Dirichlet distributed on the unit simplex with uniform parameter $\beta_n/2$. The measure $\mu_n$ is a random element of the space $\mathcal M_1(\mathbb R)$ of probability measures on the real line, equipped with the weak topology and its corresponding $\sigma$-algebra. 

Throughout this paper, we make the following assumptions: 
\begin{align}\label{assumptions}
\lim_{n\to \infty} n\beta_n =2c\in [0,\infty),\qquad \lim_{n\to \infty} -\frac{1}{n}\log(\beta_n) = \xi\in [0,\infty] . 
\end{align}
When the first limit exists with $c\in(0,\infty)$, the second condition is automatically satisfied with $\xi=0$. Therefore, the second condition is only relevant in the \emph{very high} temperature regime, when $n\beta_n\to 0$. In this case we have to distinguish between a \emph{subexponential} regime when $\xi=0$, an \emph{exponential} regime with $\xi\in(0,\infty)$ and a \emph{superexponential} regime with $\xi=\infty$. 
We then have the following large deviation principle. 

\begin{thm}\label{thm:mainLDP}
Suppose the limits in \eqref{assumptions} exist, then the sequence of weighted spectral measures $\mu_n$ as defined in \eqref{def:spectralmeasure} satisfies a large deviation principle with speed $n$ and good rate function $\mathcal I$. For $\mu$ a measure with countable support $\operatorname{supp}(\mu)$, we have
\begin{align*}
\mathcal I(\mu) =  (|\operatorname{supp}(\mu)|-1)\cdot \xi  + \frac{1}{2}\sum_{\lambda \in \operatorname{supp}(\mu)} \lambda^2 .
\end{align*} 
In the case of infinitely many support points and $\xi=0$, or if $\mu$ has a single support point and $\xi = \infty$, we interpret $\infty\cdot 0$ or $0\cdot \infty$, respectively, as $0$. If the support of $\mu$ is uncountable, we have $\mathcal I(\mu)=+\infty$.  
\end{thm}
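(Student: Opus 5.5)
We prove the statement by splitting $\mu_n$ into two independent pieces, the rescaled eigenvalues $x_k=\sqrt{\beta_n}\lambda_k$ and the Dirichlet weights, and proving a large deviation principle for each.

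\emph{Eigenvalues.} After the change of variables the density of $(x_1,\dots,x_n)$ is proportional to
\begin{align*}
\prod_{i<j}|x_i-x_j|^{\beta_n}\, e^{-\frac n2\sum_k x_k^2}.
\end{align*}
Since $n\beta_n\to 2c$, the interaction term is of order $\beta_n n^2=O(n)$ only on configurations where points nearly collide, and it is otherwise negligible at speed $n$. This suggests that the $x_k$ behave like i.i.d.\ $N(0,1/n)$ variables. The cost of placing points at locations $y$ is then $\frac12\sum y^2$, summed over the occupied points. I would make this precise by comparing with the Gaussian case: bound the Vandermonde factor above by $\prod(1+|x_i|+|x_j|)^{\beta_n}$, and below, on events where the points are separated by distance $e^{-\delta n}$, by $e^{-\delta\beta_n n^2}=e^{-O(\delta n)}$. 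The normalizing constant is given by the Mehta integral, and Stirling asymptotics show that it contributes nothing at speed $n$.

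\emph{Weights.} Represent $w_k=G_k/\sum_j G_j$ with $G_k$ i.i.d.\ Gamma$(\beta_n/2)$. Giving one fixed index a weight of order one costs nothing, since $\sum_j G_j$ is essentially a sum of gammas. Giving $m$ indices weights bounded below costs roughly $\beta_n^{m-1}$, because $P(G\ge\epsilon)\approx \beta_n/2$ for a Gamma$(\beta_n/2)$ variable. Among $n$ indices the number of choices is $\binom nm$, so the total probability is about $\beta_n^{m-1}n^{m}$. At speed $n$, $-\frac1n\log$ of this quantity tends to $(m-1)\xi$. This accounts for the term $(|\operatorname{supp}\mu|-1)\xi$; in the regime $c>0$ we have $\xi=0$ and all extra points are free. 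The total mass on the remaining indices is negligible, but we must check that it cannot create new atoms cheaply. Its mass lies near the eigenvalues, whose typical positions are $O(n^{-1/2})\to 0$, so it merges into an atom at $0$, or costs $\lambda^2/2$ per displaced eigenvalue.

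\emph{Combining.} Use the fact that exponential tightness holds: a second moment bound $E e^{n\int x^2 d\mu_n/4}$ controls the Gaussian tail. It then suffices to prove the weak LDP with lower and upper bounds on small balls. For the lower bound, given a finitely supported $\mu=\sum_{i=1}^m p_i\delta_{y_i}$, take the event that $x_i\approx y_i$ for $i\le m$, all other $x_k$ lie in $[-\epsilon,\epsilon]$, $w_i\approx p_i$ for $i\le m$, and the remaining weights are tiny. Its probability is about $e^{-n(\frac12\sum y_i^2+(m-1)\xi+o(1))}$. If one of the $y_i$ equals $0$, the mass can be carried by near-zero eigenvalues, and the formula is still consistent. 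Measures with infinite support are approximated by truncation, which handles the conventions about $0\cdot\infty$ and $\infty\cdot 0$.

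For the upper bound, fix a neighborhood of $\mu$. I would count the indices $k$ whose weight exceeds $\eta$, of which there are at most $1/\eta$. The measure must be approximated by these atoms together with small-mass remainders, and the remainders can only produce mass far from zero if many eigenvalues are displaced. Any support point $y$ of $\mu$ with $y\neq0$ and $\mu(\{y\})$ small still forces some eigenvalue near $y$, at cost $y^2/2$. I would then count atoms with weight above $\eta$, which costs $\xi$ each beyond the first, and let $\eta\to0$. If $\mu$ has uncountable support, infinitely many support points require eigenvalues in disjoint regions, and the cost diverges. I expect this upper bound to be the main obstacle, particularly the joint control of the Vandermonde interaction and the combinatorial entropy $\binom nm$ in the exponential regime.
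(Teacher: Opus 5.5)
\textbf{Verdict: genuine gap.} Your split of $\mu_n$ into independent eigenvalues and Dirichlet weights is a different route from the paper's, and your heuristic costs are right. However, the step you propose for making the eigenvalue part rigorous fails.

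On configurations where all gaps are at least $e^{-\delta n}$, you only get $\prod_{i<j}|x_i-x_j|^{\beta_n}\ge e^{-\delta n\beta_n n(n-1)/2}$. Since there are $\binom n2$ pairs, this is of order $e^{-c\delta n^2}$, not $e^{-O(\delta n)}$. Nor is the normalizing constant negligible. For $Z_n=\int_{\mathbb R^n}\prod_{i<j}|x_i-x_j|^{\beta_n}e^{-\frac n2\sum_k x_k^2}\,dx$, Mehta's integral gives
\begin{align*}
\frac{Z_n}{(2\pi/n)^{n/2}}=n^{-\beta_n n(n-1)/4}\prod_{j=1}^n\frac{\Gamma(1+j\beta_n/2)}{\Gamma(1+\beta_n/2)}=\exp\Big(-\big(\tfrac c2+o(1)\big)n\log n+O(n)\Big).
\end{align*}
This reflects that typical pairwise distances are of order $n^{-1/2}$. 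Dividing your upper bound $\prod_{i<j}(1+|x_i|+|x_j|)^{\beta_n}$ by $Z_n$ therefore loses a factor $e^{(\frac c2+o(1))n\log n}$, and nothing survives at speed $n$. More fundamentally, the law of $(x_k)$ is not within $e^{o(n)}$ of the i.i.d.\ $N(0,\frac1n)$ law: the empirical measure of $\sqrt n\,x_k$ has its own speed-$n$ LDP with a logarithmic energy term. So no global comparison can work.

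What is missing is an outlier decomposition. The marginal density of $m$ distinguished eigenvalues is
\begin{align*}
\frac{Z_{n-m}}{Z_n}\,e^{-\frac n2\sum_{i\le m}y_i^2}\prod_{i<j\le m}|y_i-y_j|^{\beta_n}\;\mathbb E_{n-m}\Big[\prod_{i\le m<k}|y_i-x_k|^{\beta_n}\Big],
\end{align*}
where $\mathbb E_{n-m}$ is over the $(n-m)$-point ensemble with the same $\beta_n$ and weight $e^{-nx^2/2}$. By Mehta's formula $Z_{n-m}/Z_n=e^{O(\log n)}$. The cross term is $e^{O(1)}$ for bounded $y_i$ away from the bulk, because $(n-m)\beta_n$ is bounded. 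Together with a union bound over the $\binom nm$ index sets, this gives the cost $\frac12\sum_i y_i^2$ in both directions.

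The weight half of your upper bound also has a hole. Counting only indices with $w_k>\eta$ misses an atom of $\mu$ at $0$ that is built from many small weights on bulk eigenvalues. For $\mu=\frac12\delta_0+\frac12\delta_1$, only one eigenvalue has to move and only one weight need exceed $\eta$, so your count gives $\frac12$ instead of $\xi+\frac12$. You must show separately that such a spread-out remainder costs $\xi$.

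A clean way is Dirichlet aggregation. Condition on the eigenvalues and let $W_1,\dots,W_m$ be the total weights of the index groups whose eigenvalues lie near distinct support points $y_1,\dots,y_m$ of $\mu$. Together with the leftover weight, they are Dirichlet with parameters $n_i\beta_n/2$. For $n\beta_n\to0$ one has $P(W_i\ge\delta\ \forall i)\le C_\delta(n\beta_n)^{m-1}$ uniformly in the $n_i$. This gives the cost $(m-1)\xi$ however the mass is split inside a group.

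For comparison, the paper avoids both difficulties. In the Dumitriu--Edelman tridiagonal model all entries are independent, and the cost $\xi$ comes from $1/\Gamma((n-k)\beta_n/2)\approx(n-k)\beta_n/2$ for each nonzero $a_k$. The rate for $\mu$ then follows from Dawson--G\"artner and contraction through a continuous extension of the spectral map. The infimum over preimages is attained at the truncated Jacobi sequence.
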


Let us remark that the rate function differs substantially from rate functions in other large deviation principles as in \cite{arous1997large,liu2020large}, which is only finite for absolutely continuous measures, or in \cite{gamboa2016sumrules}, which contains a Kullback-Leibler divergence. The value $\mathcal I(\mu)$ of the good rate function in Theorem \ref{thm:mainLDP} is zero if and only if $\mu=\delta_0$ is the Dirac measure in zero. As a consequence of Theorem \ref{thm:mainLDP}, we therefore have the exponentially fast convergence of $\mu_n$ to $\delta_0$. In the superexponential regime when $\xi=\infty$, the value $\mathcal I(\mu)$ of the rate is only finite if $\mu$ is a measure with a single support point. In the exponential regime, $\mathcal I(\mu)$ is finite only for finitely supported measures, with an additional cost of $\xi$ for every support point after the first. 

Only when $\xi=0$, as in the classical high-temperature regime with $c\in (0,\infty)$, we may have $\mathcal I(\mu)<\infty$ for measures with infinite support.  In this case, the rate function may be rewritten as follows: when $\mu$ is the projected spectral measure of a bounded self-adjoint operator $H_\mu$ with some cyclic vector $e$ (see \eqref{eq:spectralmeasuremoments}), we have
\begin{align} \label{sumrule}
\mathcal I(\mu) = \tfrac{1}{2} ||H_\mu||_{\mathrm{HS}}^2 ,
\end{align}
where $||\cdot ||_{\mathrm{HS}}$ is the Hilbert-Schmidt norm, such that $\mathcal I(\mu)<\infty$ if and only if $H_\mu$ is a Hilbert-Schmidt operator. The identity \eqref{sumrule} is then an example of a \emph{sum rule}, as discussed in \cite[Chapter 1]{simon2011szego}. The close connection between large deviation principles for random matrices and sum rules from spectral theory has been established in a series of papers \cite{gamboa2011large,gamboa2016sumrules,gamboa2022sumrules,gamboa2025multicutcircle}
and it is even possible to prove such a sum rule with the help of large deviation theory. 

The strategy to prove Theorem \ref{thm:mainLDP} makes use of the tridiagonal representation of the Gaussian $\beta$-ensemble due to \cite{Dumitriu_2002}, a random tridiagonal matrix $T_n$ build from independent normal and gamma random variables, which is the Jacobi matrix of a spectral measure as in \eqref{def:spectralmeasure}. This allows for a relatively short proof, starting with LDPs for entries of $T_n$, which then can be extended to LDPs for the tridiagonal matrix $T_n$, and then transferred to the random measure $\mu_n$. These steps are carried out in the following two sections. 

We will observe in the proof that it is very unlikely for the off-diagonal entries of $T_n$ to not be close to zero. In particular, when $\xi>0$, the joint rate function for the entries of $T_n$ adds the value $\xi$ for every non-zero off-diagonal term. Since the number of support points of a spectral measure is directly linked to the number of positive off-diagonal terms in its tridiagonal Jacobi matrix, this explains the additional cost of $\xi$ in the rate function in Theorem \ref{thm:mainLDP} for every support point after the first one.   

The strong concentration on measures with few support points can also be explained with the behavior of the Dirichlet weights: when $\beta_n$ is very small, most of the mass is concentrated on one support point, it is very unlikely to observe several large values among $w_1,\dots ,w_n$. The support points $\sqrt{\beta_n}\lambda_1,\dots ,\sqrt{\beta_n}\lambda_n$ converge to 0, and large values for support points are not very likely. These statements will be made more precise in a companion paper \cite{spectralLDP}, where we study the large deviation behavior of eigenvalues and Dirichlet distributed weight vectors in the high temperature regime. 

\section{Large deviations for the tridiagonal model}

%In the following, we write $\beta$ for $\beta_n$ to simplify notation. 
The spectral measure defined in \eqref{def:spectralmeasure} has support points $\tilde \lambda_i = \sqrt{\beta_n}\lambda_i$, where the rescaled eigenvalues $(\tilde\lambda_1,\dots \tilde \lambda_n)$ have the Lebesgue density
\begin{align}
	\tilde Z_{\beta}^G \prod_{1 \leq i<j \leq n} |\tilde\lambda_i-\tilde\lambda_j|^{\beta_n}  e^{-\sum_{i=1}^n \frac{n \tilde\lambda_i^2}{2}} . 
\end{align}
A symmetric tridiagonal matrix with such an eigenvalue density was constructed by \cite{Dumitriu_2002}, generalizing previous work by \cite{trotter1984}, and is given as
\begin{align} \label{eq:tridiagonal}
T_n =\begin{pmatrix}
	b^{(n)}_1 & a^{(n)}_1 &  \\
	a^{(n)}_1 & b^{(n)}_2 & \ddots \\
	& \ddots & \ddots & a^{(n)}_{n-1}\\
	& & a^{(n)}_{n-1} & b^{(n)}_n
\end{pmatrix} ,
\end{align}
where $b^{(n)}_1,\dots ,b^{(n)}_n,a_1^{(n)},\dots ,a_{n-1}^{(n)}$ are independent random variables, the $a_k^{(n)}$ are positive and 
\begin{align*}
	b_k^{(n)} &\sim \mathcal{N}(0,\tfrac{1}{n}) ,  \\
	\big(a_k^{(n)}\big)^2 & \sim \Gamma\big((n-k)\tfrac{\beta_n}{2},\tfrac{1}{n}\big)  .
\end{align*}
Here the Gamma-distribution is parametrized by scale parameter, such that the expectation of $\Gamma(\alpha,\theta)$ with $\alpha,\theta >0$ is given by $\alpha\theta$. The first unit vector $e_1$ is cyclic for $T_n$ and our random spectral measure $\mu_n$ as given in \eqref{def:spectralmeasure} can then be defined as the spectral measure of the pair $(T_n,e_1)$, meaning that it is defined by the moment relation
\begin{align}\label{eq:spectralmeasuremoments}
	\int x^k \, d\mu_n(x) = \langle e_1, T_n^k e_1 \rangle.
\end{align}
Indeed, this spectral measure is given by 
\begin{align} \label{def:spectralmeasure2}
	\mu_n = \sum_{i=1}^n w_i \delta_{\tilde \lambda_i} , 
\end{align}
supported by the eigenvalues $\tilde \lambda_1,\dots ,\tilde \lambda_n$ of $T_n$ and, as proven by \cite{Dumitriu_2002}, the weights $w_1,\dots ,w_n$ are independent of the eigenvalues and follow a Dirichlet distribution on the standard simplex with homogeneous parameter $\beta_n/2$, that is, $(w_1,\dots ,w_{n-1})$ have the Lebesgue density
\begin{align} \label{eq:dirdensity}
	\frac{\Gamma(n\beta_n/2 )}{\Gamma(\beta_n/2)^n} \big[ w_1\dots w_{n-1}(1-w_1-\dots -w_{n-1})\big]^{\beta_n/2-1} 
\end{align}
on $(0,1)^{n-1}$. We may therefore study the large deviation behavior of $\mu_n$ by starting with the entries of the tridiagonal matrix $T_n$. The following lemma yields LDPs for individual entries.

\begin{lem} 
	\label{lem:LDPentries}
Suppose the sequence $(\beta_n)_n$ satisfies the assumptions in \eqref{assumptions}. 
	\begin{enumerate}
		\item[(a)] 
		Let $(X_n)_n$ be a sequence of random variables with $X_n \sim N(0,\tfrac{1}{n})$. Then $(X_n)_n$ satisfies an LDP in $\RR$ with speed $n$ and good rate function given by $\mathcal I_1(x) = x^2/2$.
		\item[(b)]
		Let $(Y_n)_n$ be a sequence of random variables with $Y_n \sim \Gamma((n-k)\tfrac{\beta_n}{2}, \tfrac{1}{n})$. Then $(Y_n)_n$ satisfies an LDP in $[0,\infty)$ with speed $n$ and a good rate function $\mathcal{I}_2$ given by 			\begin{align*}
				\mathcal{I}_2(z) = \begin{cases}
					z + \xi & \text{if } z>0, \\
					0 & \text{if } z = 0.
				\end{cases}
			\end{align*}
	\end{enumerate}
\end{lem}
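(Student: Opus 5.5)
Part (a) is standard: $X_n$ has density $\sqrt{n/(2\pi)}e^{-nx^2/2}$. A direct computation, or Cramér's theorem since $X_n \overset{d}{=} \frac1n\sum_{i=1}^n Z_i$ with $Z_i$ i.i.d. standard normal, gives the LDP with rate $x^2/2$. This rate is continuous with compact level sets, so it is good.

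For part (b) we work directly with the density. Write $\alpha_n=(n-k)\beta_n/2$, so that $\alpha_n\to c$ (with $k$ fixed) and $\alpha_n\le n\beta_n/2$ is bounded. The density of $Y_n$ is
\[
f_n(z)=\frac{n^{\alpha_n}}{\Gamma(\alpha_n)}z^{\alpha_n-1}e^{-nz},\qquad z>0 .
\]
The key asymptotic is the prefactor. Since $\Gamma(\alpha)\sim 1/\alpha$ as $\alpha\to0$ and $\Gamma(\alpha_n)\to\Gamma(c)$ when $c>0$, we get $\frac1n\log\frac{n^{\alpha_n}}{\Gamma(\alpha_n)}\to -\xi$. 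Indeed, $\frac{\alpha_n\log n}{n}\to0$, and $\frac1n\log\Gamma(\alpha_n)^{-1}\approx\frac1n\log\alpha_n=\frac1n\log\beta_n+\frac1n\log\frac{n-k}{2}\to-\xi$. This also covers $\xi=\infty$, and when $c>0$ we have $\xi=0$ automatically.

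Because $[0,\infty)$ is locally compact, exponential tightness together with local (ball) estimates suffices for the full LDP. Exponential tightness follows from Chebyshev's inequality with the moment generating function: $\mathbb E e^{nY_n/2}=2^{\alpha_n}$ is bounded, so $P(Y_n>M)\le 2^{\alpha_n}e^{-nM/2}$. The local estimates split into two cases.

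\emph{Case $z>0$.} For $\delta<z$, integrate $f_n$ over $(z-\delta,z+\delta)$. The factor $z^{\alpha_n-1}$ is bounded above and below by constants on this interval, and $e^{-nz'}$ lies between $e^{-n(z\pm\delta)}$. Hence
\[
\frac1n\log P\big(|Y_n-z|<\delta\big)\to -\xi-z+O(\delta),
\]
where for $\xi=\infty$ the limit is $-\infty$, matching $\mathcal I_2(z)=\infty$.

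\emph{Case $z=0$.} For the lower bound we need $P(Y_n<\delta)$ not to be exponentially small. Use $\mathbb E Y_n=\alpha_n/n\to0$ and Markov's inequality, $P(Y_n\ge\delta)\le\alpha_n/(n\delta)\to0$, so $P(Y_n<\delta)\to1$ and the rate is $0$. The upper bound is trivial.

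Combining the two cases, $\mathcal I_2(z)=z+\xi$ for $z>0$ and $\mathcal I_2(0)=0$. This function is lower semicontinuous, since near $0$ it takes values $\ge0=\mathcal I_2(0)$, and its level sets are compact, so it is good.

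The main obstacle is the prefactor asymptotic in the very high temperature regime, i.e. controlling $\log\Gamma(\alpha_n)$ as $\alpha_n\to0$ uniformly enough to extract $\xi$. It is handled by $\Gamma(\alpha)=\Gamma(1+\alpha)/\alpha$ with $\Gamma(1+\alpha)\to1$. A secondary point is that the ball estimates for $z>0$ must give exactly $\infty$ when $\xi=\infty$, which the computation above does.
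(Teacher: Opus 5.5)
Your proposal is correct and follows essentially the same route as the paper: the prefactor asymptotic $\frac1n\log\Gamma(\alpha_n)^{-1}\to-\xi$ via $\alpha\Gamma(\alpha)\to1$, small-ball estimates at $z>0$ and $z=0$ giving a weak LDP, and exponential tightness to upgrade it to a full LDP with good rate function. The differences are only in the details: you use Cram\'er instead of G\"artner--Ellis in (a), Markov's inequality with $\EE Y_n=\alpha_n/n\to0$ for the lower bound at $z=0$, and the bound $\EE e^{nY_n/2}=2^{\alpha_n}$ for exponential tightness, where the paper estimates the integrals directly; your versions are, if anything, slightly cleaner.
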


\begin{proof}
	The proof of part (a) uses the Gärtner-Ellis Theorem \cite[Theorem 2.3.6]{dembo2009large}. For this consider
	\begin{align*}
		\frac{1}{n}\log\EE\exp\big(nX_n t\big)
		=  \frac{1}{n} \log\big(e^{n \frac{t^2}{2}}\big)
		= \frac{t^2}{2}.
	\end{align*}
	Since the mapping $t\mapsto \tfrac{1}{2}t^2$ is finite, differentiable on $\RR$ and steep, the good rate function is given by its Fenchel-Legendre transform, which is $\mathcal I_1$. 
	
For the gamma distributed entries, we will show a weak LDP by proving an upper and lower bound for the probability of small balls. By showing exponential tightness, we can improve the result to a strong LDP. Fix $z\geq 0$ and let $B_\varepsilon(z) = (z-\varepsilon,z+\varepsilon)$ for $\varepsilon>0$. The weak LDP follows once we show that 
\begin{align}\label{eq:weakLDP}
\lim_{\varepsilon\to 0} \limsup_{n \to \infty} \frac{1}{n} \log P\big(Y_n \in B_{\varepsilon}(z)\big)  = \lim_{\varepsilon\to 0} \liminf_{n \to \infty} \frac{1}{n} \log P\big(Y_n \in B_{\varepsilon}(z)\big) = -\mathcal I_2(z) .  
\end{align} 
	 First suppose $z>0$ and let $\varepsilon >0$ be small enough such that $z-\varepsilon >0$. We need to estimate 
\begin{align*}
\frac{1}{n} \log P\big(Y_n \in B_{\varepsilon}(z)\big) = \frac{1}{n}\log\left( \frac{n^{(n-k)\beta_n/2}}{\Gamma\big( (n-k)\beta_n/2\big)} \int_{z-\varepsilon}^{z + \varepsilon} x^{(n-k)\beta_n/2-1 } e^{-nx}\,dx\right).
\end{align*}	 
To simplify the normalization constant, we may use
\begin{align} \label{eq:normconst}
\lim_{n\to \infty} \frac{1}{n}\log \left( \frac{1}{\Gamma\big( (n-k)\beta_n/2\big)} \right) = 
\lim_{n\to \infty}\frac{1}{n}\log\left((n-k)\beta_n/2 \right) = \lim_{n\to \infty} \frac{1}{n} \log(\beta_n) = -\xi , 
\end{align}
where in the case $n\beta_n\to 0$, the first equality follows since $\lim_{x \to 0}x\Gamma(x) = 1$. We then have the lower bound
	\begin{align*}
		 \liminf_{n \to \infty} \frac{1}{n} \log P\big(Y_n \in B_{\varepsilon}(z)\big) 
		& = - \xi + \liminf_{n\to \infty} \frac{1}{n} \log \left( \int_{z-\varepsilon}^{z + \varepsilon} x^{(n-k)\beta_n/2-1 } e^{-nx}\,dx\right) \\
		&\geq -\xi 
		+ \liminf_{n\to \infty} \frac{1}{n} \log \left( \frac{(z-\varepsilon)^{(n-k)\beta_n/2}}{z+\varepsilon}\,e^{-n(z+\varepsilon)}\right)\\
		&= -\xi - (z+\varepsilon) . 
	\end{align*}
Analogously, we have
\begin{align*}
\limsup_{n \to \infty} \frac{1}{n} \log P\big(Y_n \in B_{\varepsilon}(z)\big)
\leq -\xi - (z-\varepsilon) .
\end{align*}
This implies that \eqref{eq:weakLDP} holds for $z>0$ with $\mathcal I_2(z) = z+\xi$, with $\xi $ as in \eqref{assumptions}. 
When $z=0$, we have the lower bound
	\begin{align*}
		\liminf_{n \to \infty} \frac{1}{n} \log P\big(Y_n \in B_{\varepsilon}(0)\big) 
		&=  \liminf_{n \to \infty}\frac{1}{n} \log \left( \big( (n-k)\beta_n/2\big)\int_{0}^{\varepsilon} (nx)^{(n-k)\beta_n/2-1 } e^{-nx}n\,dx \right)\\
		&=  \liminf_{n \to \infty}\frac{1}{n} \log \left( \big( (n-k)\beta_n/2\big) \int_{0}^{n\varepsilon} x^{(n-k)\beta_n/2-1 } e^{-x}\,dx \right)\\
		&\geq  \liminf_{n \to \infty}\frac{1}{n} \log \left( \big( (n-k)\beta_n/2\big) \int_{0}^{\varepsilon} x^{(n-k)\beta_n/2-1 } e^{-x}\,dx \right)\\
		&\geq  \liminf_{n \to \infty}\frac{1}{n} \log \left( \big( (n-k)\beta_n/2\big)e^{-\varepsilon}\int_{0}^{\varepsilon} x^{(n-k)\beta_n/2-1 } \,dx \right)\\
		& =   \liminf_{n \to \infty}\frac{1}{n} \log \left( \varepsilon^{(n-k)\beta_n/2 } e^{-\varepsilon}  \right)\\
		&= 0.
	\end{align*}
%	Since $\log P\big(Y_n \in B_{\varepsilon}(0)\big) \leq 0$, we have
%	\begin{align*}
%		\limsup_{\varepsilon \to 0}\lim_{n \to \infty} \frac{1}{n} \log P\big(Y_n \in B_{\varepsilon}(0)\big) \leq 0.
%	\end{align*}
This implies \eqref{eq:weakLDP} for $z=0$. The statement \eqref{eq:weakLDP} implies then that $(Y_n)_n$ satisfies the weak LDP with speed $n$ and the good rate function $\mathcal I_2$ by \cite[Theorem 4.1.11]{dembo2009large} (see also Corollary D.6 in \cite{anderson2010introduction}). It remains to show that $(Y_n)_n$ is exponentially tight, this strengthens the result to a full LDP by \cite[Lemma 1.2.18]{dembo2009large}. 

For the exponential tightness, define $K_M = [0,2M]$ for $M>0$. Since $n\beta_n$ is bounded from above, we have $x^{(n-k)\beta_n-1} \leq e^{x/2}$ on $[2nM,\infty)$ for $n$ large enough. This yields
	\begin{align*}
		\limsup_{n \to \infty} \frac{1}{n} \log P\big(Y_n \in K_M^c\big)
		&=  \limsup_{n \to \infty}\frac{1}{n} \log \left( \big( (n-k)\beta_n/2\big)\int_{2nM}^{\infty} x^{(n-k)\beta_n/2-1 } e^{-x}\,dx \right) \\
		&\leq \limsup_{n \to \infty} \frac{1}{n} \log(\beta_n) + \limsup_{n \to \infty} \frac{1}{n} \log \left(\int_{2nM}^\infty e^{x/2} e^{-x} \,dx \right) \\
		&\leq \limsup_{n \to \infty} \frac{1}{n} \log \left(\int_{2nM}^\infty e^{-x/2} \,dx \right) \\
		&= -M,
	\end{align*}
that is, the sequence $(Y_n)_n$ is exponentially tight. 
\end{proof}

Lemma \ref{lem:LDPentries} implies for any $k\geq 1$ the LDP for the sequence $(b_k^{(n)})_{n\geq 1}$ with speed $n$ and good rate function $\mathcal I_1$, where $b_k^{(n)}$ is the entry of the tridiagonal matrix \eqref{eq:tridiagonal} and we set $b_k^{(n)}=0$ if $k>n$. Similarly, the sequence $((a_k^{(n)})^2)_{n\geq 0}$ satisfies the LDP with speed $n$ and good rate function $\mathcal I_2$, setting $a_k^{(n)}=0$ for $k\geq n$. The contraction principle implies the LDP for the sequence $(a_k^{(n)})_{n\geq 0}$ in $[0,\infty)$ with good rate function given by 
\begin{align*}
\mathcal I_3(z) = \mathcal I_2(z^2) = \begin{cases}
					z^2 + \xi & \text{if } z>0, \\
					0 & \text{if } z = 0.
				\end{cases}
\end{align*}
Since the entries $b^{(n)}_1,a^{(n)}_1,b^{(n)}_2,...,a^{(n)}_{n-1},b^{(n)}_n$ are independent, Lemma \ref{lem:LDPentries} and \cite[Exercise 4.2.7]{dembo2009large} imply for any $N\geq 1$ the LDP for the sequence 
\begin{align*}
\big( (b_1^{(n)},a_1^{(n)},\dots ,b_{N}^{(n)},a_{N}^{(n)})\big)_{n\geq 1}
\end{align*}
in $(\mathbb R\times [0,\infty))^N$ with speed $n$ and good rate function defined by 
\begin{align*}
	\mathcal{I}_{4,N}(x_1,\dots ,x_{2N}) =  \sum_{k=1}^N \mathcal I_1(x_{2k-1}) + \mathcal I_3(x_{2k}) . 
\end{align*}
Let us now denote by 
\begin{align} \label{eq:recsequence}
r^{(n)} = \big(b_1^{(n)},a_1^{(n)},b_2^{(n)},a_2^{(n)},\dots,b_n^{(n)},0,\dots \big)
\end{align}
the sequence of all entries of $T_n$, extended by zeroes. Then the projective method of the Dawson-Gärtner Theorem \cite[Theorem 4.6.1]{dembo2009large} implies that $(r^{(n)})_{n\geq 1}$ satisfies an LDP, which we formulate in the following proposition. 

\begin{prop} \label{prop:projectiveLDP}
Suppose the sequence $(\beta_n)_n$ satisfies the assumptions in \eqref{assumptions}. Then the sequence $(r^{(n)})_{n\geq 1}$ as defined in \eqref{eq:recsequence} satisfies the LDP in $(\RR\times [0,\infty))^\NN$ with speed $n$ and good rate function
\begin{align*}
	\mathcal{I}_5(r)  = \sup_{N\geq 1} \mathcal{I}_{4,N}(b_1,a_1,\dots b_N ,a_N) = \frac{1}{2}\sum_{k=1}^\infty b_{k}^2 + \sum_{k \in J(r)} (a_k^2 + \xi)
\end{align*}
where $r= (b_1,a_1,b_2,a_2,\dots )$ and $J(r) := \{k \in \NN \mid a_k >0\}$.  
\end{prop}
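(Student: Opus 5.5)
We apply the Dawson--Gärtner theorem \cite[Theorem 4.6.1]{dembo2009large} to the projective system given by the finite-dimensional coordinate projections. The space $\mathcal X=(\RR\times[0,\infty))^\NN$ with the product topology is the projective limit of the spaces $\mathcal Y_N=(\RR\times[0,\infty))^N$. The maps $p_{N}:\mathcal X\to\mathcal Y_N$ keep the first $2N$ coordinates, and the maps $p_{MN}:\mathcal Y_M\to\mathcal Y_N$ for $M\ge N$ are the analogous truncations. Both are continuous and compatible, $p_{MN}\circ p_M=p_N$.

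By the discussion preceding the proposition, the projections $p_N(r^{(n)})=(b_1^{(n)},a_1^{(n)},\dots,b_N^{(n)},a_N^{(n)})$ satisfy an LDP with speed $n$ and good rate function $\mathcal I_{4,N}$. Here we use the convention that entries with index beyond $n$ are set to zero, which does not affect the LDP since only finitely many $n$ are concerned. The Dawson--Gärtner theorem then immediately gives an LDP for $(r^{(n)})_n$ in $\mathcal X$ with good rate function
\[
\mathcal I_5(r)=\sup_{N\ge1}\mathcal I_{4,N}(p_N(r)).
\]
No extra compatibility check on the rate functions is needed, since the theorem only requires LDPs for each projection.

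It then remains to identify the supremum with the explicit formula. Every summand $\mathcal I_1(b_k)=b_k^2/2$ and $\mathcal I_3(a_k)$ is nonnegative, so $N\mapsto\mathcal I_{4,N}(p_N(r))$ is nondecreasing. Hence the supremum equals the limit $N\to\infty$ of the partial sums, that is, the full series
\[
\sum_{k=1}^\infty\Big(\tfrac12 b_k^2+\mathcal I_3(a_k)\Big),
\]
with value in $[0,\infty]$. Since $\mathcal I_3(a_k)=a_k^2+\xi$ if $a_k>0$ and $\mathcal I_3(a_k)=0$ if $a_k=0$, splitting the series gives $\tfrac12\sum_k b_k^2+\sum_{k\in J(r)}(a_k^2+\xi)$. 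Rearranging is legitimate because all terms are nonnegative.

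The case $\xi=\infty$ needs a little care. There $\mathcal I_3(a_k)=\infty$ for every $a_k>0$, and the formula reads $\infty$ whenever $J(r)\neq\emptyset$, which is consistent. Likewise, if $\xi=0$ and $J(r)$ is infinite, the contribution $\sum_{k\in J(r)}\xi$ is $0$ under the convention $\infty\cdot0=0$. This is exactly what the monotone limit of the partial sums gives.

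The only point that needs attention is checking the hypotheses of Dawson--Gärtner: the projective limit must carry the product topology, and the finite-dimensional LDPs must be stated in the right spaces with good rate functions. Goodness of $\mathcal I_{4,N}$ follows from goodness of $\mathcal I_1$ and $\mathcal I_3$ via \cite[Exercise 4.2.7]{dembo2009large}. Goodness of $\mathcal I_5$ then comes from the theorem itself.
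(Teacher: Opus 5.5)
Your proposal is correct and takes the same route as the paper: the finite-dimensional LDPs with good rate functions $\mathcal I_{4,N}$, obtained from independence and \cite[Exercise 4.2.7]{dembo2009large}, are lifted to $(\RR\times[0,\infty))^\NN$ by the Dawson--Gärtner theorem \cite[Theorem 4.6.1]{dembo2009large}. Your monotone-limit argument identifying $\sup_N \mathcal I_{4,N}$ with the explicit series, including the edge cases $\xi=0$ and $\xi=\infty$, spells out a step the paper states without comment.
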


\section{Mapping to spectral measures: proof of Theorem \ref{thm:mainLDP}}

In order to prove Theorem \ref{thm:mainLDP}, we will make use of the contraction principle to transfer the LDP in Proposition \ref{prop:projectiveLDP} to the sequence of spectral measures $\mu_n$ as defined in \eqref{def:spectralmeasure}. Some care is necessary to obtain a suitable well-defined continuous mapping from the space of Jacobi coefficients (the entries of tridiagonal matrices) to the space of probability measures. 

To begin with, we note that $\mathcal I_5(r)=\infty$ if $r$ is not a bounded sequence, so that by \cite[Lemma 4.1.5]{dembo2009large}, the LDP for the sequence $(r^{(n)})_n$ also holds in the space 
\begin{align}
\mathcal{R} := \left\{r=(b_1,a_1,\dots ) \in (\RR \times [0,\infty))^\NN \, \left| \, \sup_{k\geq 1} (|b_k|+a_k)<\infty\right. \right\} . 
\end{align}
Let us now consider the subspace $\mathcal E_\infty\subset \mathcal R$ with
\begin{align} \label{eq:Einfty}
\mathcal{E}_\infty = \big\{r=(b_1,a_1,\dots )\in \mathcal R \, \mid \, a_k>0 \text{ for all }k\geq 1 \big\} . 
\end{align}
For $r\in \mathcal E_\infty$, we define the infinite Jacobi-matrix 
\begin{align}\label{eq:tridigonalinfinite}
	\mathcal{J}_\infty(r)  = \begin{pmatrix}
		b_1 & a_1 &  \\
		a_1 & b_2 & a_2 \\
		& a_2 & \ddots & \ddots\\
		& & \ddots &  
	\end{pmatrix} .
\end{align}
Then $\mathcal J_\infty(r)$ is a self-adjoint bounded operator on $\ell^2$ with cyclic vector the first unit vector $e_1$. By the spectral theorem, there exists a unique spectral measure $\mu=\mu(r)$ of the pair $(\mathcal J_\infty(r),e_1)$, which may be defined by the moment relation
\begin{align}\label{eq:spectralmeasuremoments}
	\int x^k \, d\mu(x) = \langle e_1, \mathcal J_\infty(r)^k e_1 \rangle 
\end{align}
for $k\geq 0$. The support of $\mu$ is given by the spectrum of $\mathcal J_\infty$ \cite[p. 236]{reed1972methods} and is by construction infinite, but compact \cite[Theorem 1.3.7]{simon2011szego}. 

For $N\in \mathbb N$, the elements of the subspace $\mathcal E_N\subset \mathcal R$ with
\begin{align} \label{eq:Einfty}
\mathcal{E}_N = \big\{r=(b_1,a_1,\dots,b_N,0,0,\dots )\in \mathcal R \, \big| \, a_k>0 \text{ for all }1\leq k< N \big\}  
\end{align}
do not define Jacobi matrices for which the first unit vector is cyclic. However, we may for $r\in \mathcal E_N$ define the finite matrix 
\begin{align}\label{eq:tridiagonalfinite}
	\mathcal J_N(r) =\begin{pmatrix}
		b_1 & a_1 &  \\
		a_1 & b_2 & \ddots \\
		& \ddots & \ddots & a_{N-1}\\
		& & a_{N-1} & b_N
	\end{pmatrix} 
\end{align}
with positive subdiagonal. Then the spectral measure $\mu_N$ of $\mathcal J_N(r)$ is well-defined, finitely supported and satisfies the analogous moment relation as in \eqref{eq:spectralmeasuremoments}, that is, 
\begin{align}\label{eq:spectralmeasuremomentsfinite}
	\int x^k \, d\mu_N(x) = \langle e_1, \mathcal J_N(r)^k e_1 \rangle . 
\end{align}
Set $\mathcal E = \mathcal E_\infty \cup (\bigcup_{N=1}^\infty \mathcal E_N)$. We may then define a mapping
\begin{align} \label{eq:spectralmapping}
\varphi: \mathcal E \rightarrow \mathcal M_1^c(\mathbb{R}) , 
\end{align}
where $\mathcal M_1^c(\mathbb{R})$ is the set of compactly supported probability measures on $\mathbb R$, such that $\varphi(r) = \mu$ is the spectral measure satisfying \eqref{eq:spectralmeasuremoments} if $r\in \mathcal E_\infty$, and $\varphi(r)=\mu_N$ is the spectral measure satisfying \eqref{eq:spectralmeasuremomentsfinite} if $r\in \mathcal E_N$.  The following lemma shows that we can extend this mapping to the space $\mathcal R$, in which the LDP for the sequence $(r^{(n)})_{n\geq1}$ holds.

\begin{lem} \label{lem:continuous} 
We endow the space $\mathcal R$ with the trace of the product topology on $(\RR \times [0,\infty))^\NN$ and $\mathcal{M}^c_1(\RR)$ with the weak topology. Then the mapping $\varphi$ defined in \eqref{eq:spectralmapping} has a continuous extension $\tilde \varphi$ to $\mathcal R$. Moreover, for any $r\in \mathcal R$, there exists a unique $N\in \mathbb N\cup\{\infty\}$ and a unique $r_N\in \mathcal E_N$, such that $\tilde \varphi(r)=\varphi(r_N)$.  
\end{lem}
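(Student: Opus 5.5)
The natural extension cuts at the first vanishing off-diagonal entry. For $r=(b_1,a_1,b_2,a_2,\dots)\in\mathcal R$ let
\[
N(r)=\inf\{k\geq 1: a_k=0\}\in\mathbb N\cup\{\infty\}.
\]
We set $r_N=(b_1,a_1,\dots,b_N,0,0,\dots)$ if $N=N(r)<\infty$, and $r_\infty=r$ otherwise. By construction $r_N\in\mathcal E_N$, and we define $\tilde\varphi(r)=\varphi(r_N)$.

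\textbf{Extension and uniqueness.} For $r\in\mathcal E$ we have $r_N=r$, so $\tilde\varphi$ extends $\varphi$. The matrix $\mathcal J_N(r_N)$ is exactly the upper-left $N\times N$ block of the (possibly reducible) infinite matrix built from $r$. Since $a_N=0$, the subspace spanned by $e_1,\dots,e_N$ reduces that matrix, so $\tilde\varphi(r)$ is also the spectral measure of $e_1$ for the full bounded operator. This also gives uniqueness of $N$ and $r_N$. The spectral measure of an element of $\mathcal E_N$ with $N<\infty$ has exactly $N$ support points, and elements of $\mathcal E_\infty$ have infinite support. Moreover, the Jacobi coefficients of a measure are uniquely determined by the measure through Gram--Schmidt orthogonalisation of $1,x,x^2,\dots$ in $L^2(\mu)$, which is the standard bijection between Jacobi matrices and measures, cf.\ \cite{simon2011szego}. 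Hence $\varphi$ is injective on $\mathcal E$, and the pair $(N,r_N)$ with $\tilde\varphi(r)=\varphi(r_N)$ is unique.

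\textbf{Continuity.} Let $r^{(m)}\to r$ coordinatewise in $\mathcal R$. The difficulty is that the product topology gives no uniform bound on $\sup_k(|b_k^{(m)}|+a_k^{(m)})$, so the supports need not stay in a fixed compact set and the method of moments cannot be applied directly. We get around this with resolvents. For $z\in\mathbb C\setminus\mathbb R$ and a bounded self-adjoint Jacobi operator $J$, put $m_J(z)=\langle e_1,(J-z)^{-1}e_1\rangle$. Weak convergence of probability measures follows once these Stieltjes transforms converge pointwise on $\mathbb C\setminus\mathbb R$.

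To prove that, let $J^{(m)}=\mathcal J(r^{(m)})$ and $J=\mathcal J(r)$ be the full infinite tridiagonal operators, which are bounded since each sequence lies in $\mathcal R$. By the reduction remark above, $m_J(z)=\int (x-z)^{-1}\,d\tilde\varphi(r)(x)$, and likewise for each $J^{(m)}$. For each finitely supported vector $v$, coordinatewise convergence gives $J^{(m)}v\to Jv$ in $\ell^2$. Finitely supported vectors form a core for the limit operator $J$, because $J$ is bounded. Strong resolvent convergence then follows from \cite[Theorem VIII.25]{reed1972methods}, and in particular $m_{J^{(m)}}(z)\to m_J(z)$. This argument also covers the case where some $a_k^{(m)}>0$ converge to $0$, or where $N(r^{(m)})$ jumps, without any case distinction.

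An alternative argument is to truncate at level $K$. The spectral measure of the $K\times K$ block has moments up to order $2K-1$ agreeing with those of the full measure, and these moments depend continuously on finitely many coordinates. This again needs tightness, however, so I would use the resolvent route.

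The main obstacle is this continuity step under the mere product topology. I would lean on strong resolvent convergence, checking carefully that no uniform norm bound is required: the core condition only concerns the limit operator, which is bounded.
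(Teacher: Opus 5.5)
Your proof is correct, but your continuity argument takes a different route from the paper's.

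\textbf{Construction and uniqueness.} You define $\tilde\varphi$ by truncating at the first vanishing $a_N$, then identify it with the spectral measure of the full operator via the reducing subspace. This is the paper's construction in reverse order: the paper defines $\tilde\varphi(r)$ directly as the spectral measure of $e_1$ for $S_\infty(r)$, and uses the block decomposition only for uniqueness. Your uniqueness argument (the number of support points fixes $N$, and Gram--Schmidt recovers the coefficients) is a more explicit version of the paper's appeal to uniqueness of Jacobi parameters.

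\textbf{Continuity.} The paper notes that each moment $\langle e_1,S_\infty(r)^k e_1\rangle$ is a polynomial in finitely many coordinates of $r$. It is therefore continuous in the product topology, and the paper concludes by the method of moments.

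Your reason for rejecting this route is mistaken: the method of moments needs no uniform bound on the supports. Suppose all moments of $\mu_m$ converge to those of a compactly supported, hence moment-determinate, $\mu$.
\begin{itemize}
\item Bounded second moments give tightness.
\item Bounded $2k$-th moments make $x^k$ uniformly integrable.
\item Hence every subsequential weak limit has the moments of $\mu$, and so equals $\mu$.
\end{itemize}
The same observation removes the tightness concern you raise about your truncation alternative.

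Your resolvent argument is nevertheless valid. Finitely supported vectors form a common core for $J$ and for every $J^{(m)}$, since all of these operators are bounded; this is exactly the hypothesis of Reed--Simon VIII.25(a) as stated. Pointwise convergence of Stieltjes transforms to the transform of a probability measure then gives weak convergence.

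So both routes work. The paper's is shorter and entirely elementary. Yours brings in strong resolvent convergence and the Stieltjes continuity theorem to handle a difficulty that is not actually present.
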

\begin{proof} 
	For $r = (b_1,a_1,b_2,a_2,...)\in \mathcal R$, we denote the infinite symmetric tridiagonal matrix build from $r$ by
	\begin{align}\label{eq:tridigonalinfinite}
		S_\infty(r)  = \begin{pmatrix}
			b_1 & a_1 &  \\
			a_1 & b_2 & a_2 \\
			& a_2 & \ddots & \ddots\\
			& & \ddots &  
		\end{pmatrix}.
	\end{align}
Since $S_\infty(r)$ is a self-adjoint bounded operator on $\ell^2$, there exists a unique spectral measure $\mu$ with compact support that satisfies the relation
\begin{align}
	\label{eq:zuordnungsvorschriftphi}
	\int x^k \, d\mu(x) = \langle e_1, S_\infty(r)^k e_1 \rangle
\end{align}
by the spectral theorem.
If $r \in \mathcal{E}$, we have 
\begin{align*}
	\langle e_1, S_\infty(r)^ke_1 \rangle 
	= \langle e_1, \mathcal{J}_N(r)^k e_1 \rangle
\end{align*}
for some $N \in \NN \cup \{\infty\}$. Thus setting $\tilde \varphi(r) = \mu$ with $\mu$ satisfying \eqref{eq:zuordnungsvorschriftphi} is indeed an extension of $\varphi$.

The function $\tilde \varphi$ is continuous: suppose $r_n \to r$ in $\mathcal R$, then $S_\infty(r_n)$ converges entrywise to $S_\infty(r)$, such that, according to \eqref{eq:zuordnungsvorschriftphi}, 
	\begin{align*}
		\int x^k \, d\tilde\varphi(r_n)  \xrightarrow[n\to \infty]{} \int x^k \, d\tilde\varphi(r) 
	\end{align*}
	for all $k\geq 1$. This yields the continuity of $\tilde\varphi$ because convergence of moments yields weak convergence if $\tilde\varphi(r)$ is compactly supported.
	
	For the remaining statement let $r = (b_1,a_1,b_2,a_2...) \in \mathcal{R}\setminus\mathcal{E}$. Then 
	\begin{align*}
		N := \inf\{k\in\NN \mid a_k = 0\}
	\end{align*}
	is finite and the matrix $S_\infty(r)$ has a block structure. More precisely,
	\begin{align}
		\label{eq:blockstructureJacobi}
		S_\infty(r) = \left(\begin{matrix}
			\mathcal{J}_N(r_N) & 0 \\
			0 &  S_\infty(\tilde r)
		\end{matrix}\right)
	\end{align}
	where
	\begin{align*}
		r_N := (b_1,a_1,...,b_N,0,0,...) \in \mathcal{E}_N 
		\quad \text{and} \quad
		\tilde r := (b_{N+1},a_{N+1},b_{N+2},a_{N+2},...) \in \mathcal{R}.
	\end{align*}
	The $k$-th power of $S_\infty(r)$ is
	\begin{align*}
		S_\infty(r)^k = \left(\begin{matrix}
			\mathcal{J}_N(r_N)^k & 0 \\
			0 &  S_\infty(\tilde r)^k
		\end{matrix}\right)
	\end{align*}
	so $\langle e_1, S_\infty(r)^k e_1 \rangle$ only depends on the entries of $\mathcal{J}_N(r_N)$. More precisely, we obtain
	\begin{align*}
		\langle e_1, S_\infty(r)^k e_1 \rangle = \langle e_1, \mathcal{J}_N(r_N)^k e_1 \rangle \quad \text{for all } k \in \NN.
	\end{align*}
	The uniqueness of $N$ and $r_N$ is immediate from the uniqueness of the Jacobi coefficients associated with the spectral measure \cite[Theorem 1.3.5,Theorem 1.3.8]{simon2011szego}.
\end{proof}
We are now ready to transfer the LDP from Jacobi coefficients to spectral measures. Recall that the LDP in Proposition \ref{prop:projectiveLDP} holds also in the space $\mathcal R$ and let $\tilde \varphi:\mathcal R\to \mathcal M_1^c(\mathbb R)$ be the continuous extension defined in Lemma \ref{lem:continuous}. For $r^{(n)}$ as defined in \eqref{eq:recsequence}, the measure $\mu_n=\varphi(r^{(n)}) = \tilde \varphi(r^{(n)})$ is then the spectral measure of $T_n$ as defined in \eqref{eq:tridiagonal}. By the work of \cite{Dumitriu_2002}, it has the same distribution as the measure given in \eqref{def:spectralmeasure}. The contraction principle then yields the LDP for the sequence $(\mu_n)_{n\geq 1}$ in the space $\mathcal M_1^c(\mathbb R)$ with speed $n$ and good rate function 
\begin{align} \label{eq:lastcontraction}
	\mathcal{I}(\mu) = \inf_{r \in \mathcal{R}:\tilde\varphi(r) = \mu} \mathcal{I}_5(r). 
\end{align}
If the value of the rate function $\mathcal I_5(r)$ is finite, the sequence $r$ of Jacobi coefficients is in $\ell^2$ and therefore $\mathcal{J}(r)$ is a Hilbert-Schmidt operator. A Hilbert-Schmidt operator is compact \cite[Theorem VI.22(e)]{reed1972methods} and its spectrum is discrete with  $0$ as the only possible accumulation point by the Riesz-Schauder Theorem  \cite[Theorem VI.15]{reed1972methods}. Therefore, $\mathcal I(\mu)$ is infinite if the support of $\mu$ is not countable. 

Suppose $\mu$ has compact and countable support and $|\operatorname{supp}(\mu)|=N$, with $N\in \mathbb N \cup\{\infty\}$. By Lemma \ref{lem:continuous}, there is a unique $r_N\in \mathcal E_N$ with $\varphi(r_N)=\mu$. If $r=(b_1,a_1,\dots)\in \mathcal R$ is any other sequence with $\tilde\varphi(r)=\mu$, the moment relations \eqref{eq:spectralmeasuremoments}, \eqref{eq:spectralmeasuremomentsfinite} imply that the first $2N-1$ entries $(b_1,a_1,\dots ,b_N)$ of $r$ agree with the first $2N-1$ entries of $r_N$. Since the subsequent entries of $r_N$ all vanish, we have
\begin{align*}
\mathcal I_5(r) - \mathcal I_5(r_N) = \frac{1}{2}\sum_{k=N+1}^\infty b_{k}^2 + \sum_{k\in J(r),k\geq N} (a_k^2 + \xi) \geq 0. 
\end{align*}
This implies
\begin{align}
\inf_{r \in \mathcal{R}:\tilde\varphi(r) = \mu} \mathcal{I}_5(r)= \mathcal I_5(r_N) .
\end{align}
The rate function $\mathcal I$ is therefore given by 
\begin{align}
\mathcal I(\mu) & = \mathcal I_5(r_N) = \frac{1}{2}\sum_{k=1}^N b_{k}^2 + \sum_{k=1}^{N-1} (a_k^2 + \xi) 
 = (N-1) \xi + \frac{1}{2} \tr (\mathcal J_N(r_N)^2),
\end{align}
where $\mathcal J_N(r_N)$ is the (finite or infinite) Jacobi matrix as in \eqref{eq:tridigonalinfinite} and \eqref{eq:tridiagonalfinite} with spectral measure $\mu$. Since 
\begin{align*}
\tr (\mathcal J_N(r_N)^2) = \sum_{j=1}^N \lambda_j(\mathcal J_N(r_N))^2 , 
\end{align*}
where $\lambda_j(\mathcal J_N(r_N))$ are the eigenvalues of $\mathcal J_N(r_N)$ and the support points of the spectral measure $\mu$, the rate function is as stated in Theorem \ref{thm:mainLDP}, restricted to the space $\mathcal{M}_1^c(\mathbb R)$ of compactly supported probability measures. If we extend $\mathcal I$ to $\mathcal M_1(\mathbb R)$, the set of all probability measures, by setting $\mathcal I(\mu)=\infty$ for $\mu$ not compactly supported, $\mathcal I$ is lower semicontinuous on $\mathcal M_1(\mathbb R)$. Therefore, the sequence $(\mu_n)_{n\geq 1}$ also satisfies the LDP in the larger space $\mathcal M_1(\mathbb R)$, which finishes the proof of Theorem \ref{thm:mainLDP}.\\

\medskip

\textbf{Acknowledgments:} 
This work was funded by the Deutsche Forschungsgemeinschaft (DFG, German Research Foundation) - Projektnummer 499508288

\bibliography{Literatur}
\bibliographystyle{abbrv}

\bigskip

{\footnotesize
	\noindent
	TU Dortmund, \\
	Fakult\"at f\"ur Mathematik, \\
	Vogelpothsweg 87, 
	44227 Dortmund, 
	Germany, \\
	{\tt helene.goetz@tu-dortmund.de}\\
	{\tt jan.nagel@tu-dortmund.de }
}
\end{document}